\begin{document}

\title{Cycle structure in SR and DSR graphs: implications for multiple equilibria and stable oscillation in chemical reaction networks}

\author{Murad Banaji}

\institute{Department of Medical Physics and Bioengineering, University College London, Gower Street, London WC1E 6BT, UK.}

\maketitle

\begin{abstract}
Associated with a chemical reaction network is a natural labelled bipartite multigraph termed an SR graph, and its directed version, the DSR graph. These objects are closely related to Petri nets. The construction of SR and DSR graphs for chemical reaction networks is presented. Conclusions about asymptotic behaviour of the associated dynamical systems which can be drawn easily from the graphs are discussed. In particular, theorems on ruling out the possibility of multiple equilibria or stable oscillation in chemical reaction networks based on computations on SR/DSR graphs are presented. These include both published and new results. The power and limitations of such results are illustrated via several examples. 
\end{abstract}

\section{Chemical reaction networks: structure and kinetics}
\label{secintro}

Models of chemical reaction networks (CRNs) are able to display a rich variety of dynamical behaviours \cite{pojman}. In this paper, a spatially homogeneous setting is assumed, so that CRNs involving $n$ chemicals give rise to local semiflows on $\mathbb{R}^n_{\geq 0}$, the nonnegative orthant in $\mathbb{R}^n$. These local semiflows are fully determined if we know 1) the CRN {\em structure}, that is, which chemicals react with each other and in what proportions, and 2) the CRN {\em kinetics}, that is, how the reaction rates depend on the chemical concentrations. An important question is what CRN behaviours are determined primarily by reaction network structure, with limited assumptions about the kinetics. 

A variety of representations of CRN structure are possible, for example via matrices or generalised graphs. Of these, a signed, labelled, bipartite multigraph, termed an {\em SR graph}, and its directed version, the {\em DSR graph}, are formally similar to Petri nets. This relationship is discussed further below. 

It is now well established that graphical representations can tell us a great deal about asymptotic behaviours in the associated dynamical systems. Pioneering early work on CRNs with mass-action kinetics (\cite{feinberg0,feinberg} for example), had a graph-theoretic component (using graphs somewhat different from those to be presented here). More recently, graph-theoretic approaches have been used to draw conclusions about multistationarity and oscillation in CRNs with restricted classes of kinetics \cite{craciun1,minchevaroussel}. 

The applicability of such work, particularly in biological contexts, is greatly increased if only weak assumptions are made about kinetics. Consequently, there is a growing body of recent work on CRNs with essentially arbitrary kinetics. It has been shown that examination of Petri nets associated with a CRN allows conclusions about persistence, that is, whether $\omega$-limit sets of interior points of $\mathbb{R}^n_{\geq 0}$ can intersect the boundary of $\mathbb{R}^n_{\geq 0}$ \cite{angelipetrinet}. Work on multistationarity has been extended beyond the mass-action setting \cite{banajicraciun,banajicraciun2}: some conclusions of this work will be outlined below. Finally, recent work applying the theory of monotone dynamical systems \cite{hirschsmith,halsmith} in innovative ways to CRNs \cite{angelileenheersontag} has close links with some of the new material presented below.

{\bf Outline}. After some preliminaries, the construction of SR and DSR graphs is presented, and their relationship to Petri nets is discussed. Some recent results about multistationarity based on cycle structure in these objects are described. Subsequently, a new result on monotonicity in CRNs is proved. This result, Proposition~\ref{mainthm}, is a graph-theoretic corollary of results in \cite{banajidynsys}. It bears an interesting relationship to results in \cite{angelileenheersontag}, which provide stronger conclusions about convergence, but make different assumptions, and a somewhat different claim. Finally, several examples, some raising interesting open questions, are presented. At various points, in order to simplify the exposition, the results are presented in less generality than possible, with more technical results being referenced. 

\section{Preliminaries}

\subsection{A motivating example}
Consider the following simple family of CRNs treated in \cite{craciun,banajiSIAM}:
\begin{equation}
\label{eqfamily}
\begin{array}{cccccccccc}
\begin{array}{l}\mathbf{SYS\,\,}1\\\begin{array}{|c|}\hline A_1 + A_2 \rightleftharpoons B_1\\
A_2 + A_3 \rightleftharpoons B_2\\
A_3 \rightleftharpoons 2A_1\\\hline
\end{array}
\\{}\\{}
\end{array} &\qquad &
\begin{array}{l}\mathbf{SYS\,\,}2\\\begin{array}{|c|}\hline A_1 + A_2 \rightleftharpoons B_1\\
A_2 + A_3 \rightleftharpoons B_2\\
A_3 +A_4 \rightleftharpoons B_3\\
A_4 \rightleftharpoons 2A_1\\\hline
\end{array}
\\{}
\end{array} & \quad &
\begin{array}{l}\cdots\\{}\\\cdots\\{}\\{}\\{}
\end{array} & \quad &
\begin{array}{l}\mathbf{SYS\,\,}n\\\begin{array}{|c|}\hline \hspace{-0.8cm} A_i + A_{i+1} \rightleftharpoons B_i,\\{\hspace{1cm} i = 1, \ldots, n+1}\\\hspace{-0.8cm} A_{n+2}  \rightleftharpoons 2A_1 \\\hline
\end{array}
\\{}\\{}
\end{array}
\end{array}
\end{equation}
The reader may wish to look ahead to Figure~\ref{SRsequence} to see representations of the SR graphs associated with the first three CRNs in this family. This family will be revisited in Section~\ref{secexamples}, and the theory to be presented will imply the following conclusions (to be made precise below): when $n$ is even, {\bf SYS}~$n$ does not allow multiple nondegenerate equilibria; when $n$ is odd, {\bf SYS}~$n$ cannot have a nontrivial periodic attractor. Both conclusions require only minimal assumptions about the kinetics. 

\subsection{Dynamical systems associated with CRNs}

In a spatially homogeneous setting, a chemical reaction system in which $n$ reactants participate in $m$ reactions has dynamics governed by the ordinary differential equation
\begin{equation}
\dot x = \Gamma v(x). \label{basic}
\end{equation}
$x = [x_1, \ldots, x_n]^T$ is the nonnegative vector of reactant concentrations, and $v = [v_1, \ldots, v_m]^T$ is the vector of reaction rates, assumed to be $C^1$. A reaction rate is the rate at which a reaction proceeds {\em to the right} and may take any real value. $\Gamma$ is the (constant) $n \times m$ {\bf stoichiometric matrix} of the reaction system. Since reactant concentrations cannot be negative, it is always reasonable to assume invariance of $\mathbb{R}^n_{\geq 0}$, i.e. $x_i = 0 \Rightarrow \dot x_i \geq 0$. 

The $j$th column of $\Gamma$, termed $\Gamma_j$, is the {\bf reaction vector} for the $j$th reaction, and a stoichiometric matrix is defined only up to an arbitrary signing of its columns. In other words, given any $m \times m$ signature matrix $D$ (i.e. any diagonal matrix with diagonal entries $\pm 1$), one could replace $\Gamma$ with $\Gamma D$ and $v(x)$ with $Dv(x)$. Obviously the dynamical system is left unchanged. The subspace $\mathrm{Im}(\Gamma)$ of $\mathbb{R}^n$ spanned by the reaction vectors is called the {\bf stoichiometric subspace}. The intersection of any coset of the $\mathrm{Im}(\Gamma)$ with $\mathbb{R}^n_{\geq 0}$ is called a {\bf stoichiometry  class}.

Two generalisations of (\ref{basic}) which include explicit inflow and outflow of substrates are worth considering. The first of these is a so-called CFSTR
\begin{equation}
\dot x = q(x_{in} - x) + \Gamma v(x). \label{basicCFSTR}
\end{equation}
$q \in \mathbb{R}$, the flow rate, is generally assumed to be positive, but we allow $q=0$ so that (\ref{basic}) becomes a special case of (\ref{basicCFSTR}). $x_{in} \in \mathbb{R}^n$ is a constant nonnegative vector representing the ``feed'' (i.e., inflow) concentrations. The second class of systems is:
\begin{equation}
\dot x = x_{in} + \Gamma v(x) - Q(x). \label{basicoutflows}
\end{equation}
Here $Q(x) = [q_1(x_1), \ldots, q_n(x_n)]^T$, with each $q_i(x_i)$ assumed to be a $C^1$ function satisfying $\frac{\partial q_i}{\partial x_i} > 0$, and all other quantities defined as before. Systems (\ref{basicoutflows}) include systems (\ref{basicCFSTR}) with $q \not = 0$, while systems (\ref{basic}) lie in the closure of systems (\ref{basicoutflows}).

Define the $m \times n$ matrix $V = [V_{ji}]$ where $V_{ji} = \frac{\partial v_j}{\partial x_i}$. A very reasonable, but weak, assumption about many reaction systems is that reaction rates are monotonic functions of substrate concentrations as assumed in \cite{banajiSIAM,leenheer,banajimathchem} amongst other places. We use the following definition from \cite{banajiSIAM} (there called NAC):

\begin{quote}
A reaction system is {\bf N1C} if i) $\Gamma_{ij}V_{ji} \leq 0$ for all $i$ and $j$, and ii) $\Gamma_{ij} = 0 \Rightarrow V_{ji} = 0$. 
\end{quote}

As discussed in \cite{banajiSIAM}, the relationship between signs of entries in $\Gamma$ and $V$ encoded in the N1C criterion is fulfilled by all reasonable reaction kinetics (including mass action and Michaelis-Menten kinetics for example), provided that reactants never occur on both sides of a reaction.

\section{Introduction to SR and DSR graphs}

\subsection{Construction and relation to Petri nets}

SR graphs are signed, bipartite multigraphs with two vertex sets $V_S$ (termed ``S-vertices'') and $V_R$ (termed ``R-vertices''). The edges $E$ form a multiset, consisting of unordered pairs of vertices, one from $V_S$ and one from $V_R$. Each edge is signed and labelled either with a positive real number or the formal label $\infty$. In other words, there are functions $\mathrm{sgn}:E \to \{-1, 1\}$, and $\mathrm{lbl}:E \to (0, \infty)\cup \{\infty\}$. The quintuple $(V_S, V_R, E, \mathrm{sgn}, \mathrm{lbl})$ defines an SR graph. 

DSR graphs are similar, but have an additional ``orientation function'' on their edges, $\mathcal{O}: E \to \{-1, 0, 1\}$. The sextuple $(V_S, V_R, E, \mathrm{sgn}, \mathrm{lbl}, \mathcal{O})$ defines a DSR graph. If $\mathcal{O}(e) = -1$ we will say that the edge $e$ has ``S-to-R direction'', if $\mathcal{O}(e) = 1$, then $e$ has ``R-to-S direction'', and if $\mathcal{O}(e) = 0$, then $e$ is ``undirected''. An undirected edge can be regarded as an edge with both S-to-R and R-to-S direction, and indeed, several results below are unchanged if an undirected edge is treated as a pair of antiparallel edges of the same sign. SR graphs can be regarded as the subset of DSR graphs where all edges are undirected. 

Both the underlying meanings, and the formal structures, of Petri nets and SR/DSR graphs have some similarity. If we replace each undirected edge in a DSR graph with a pair of antiparallel edges, a DSR graph is simply a Petri net graph, i.e. a bipartite, multidigraph. Similarly, an SR graph is a bipartite multigraph. S-vertices correspond to variables, while R-vertices correspond to processes which govern their interaction. The notions of variable and process are similar to the notions of ``place'' and ``transition'' for a Petri net. Edges in SR/DSR graphs tell us which variables participate in each process, with additional qualitative information on the nature of this participation in the form of signs, labels, and directions; edges in Petri nets inform on which objects are changed by a transition, again with additional information in the form of labels (multiplicities) and directions. Thus both Petri net graphs and SR/DSR graphs encode partial information about associated dynamical systems, while neither includes an explicit notion of time. 

There are some important differences, however. Where SR/DSR graphs generally represent the structures of continuous-state, continuous-time dynamical systems, Petri nets most often correspond to discrete-state, discrete-time systems, although the translation to a continuous-state and continuous-time context is possible \cite{DavidAlla}. Although in both cases additional structures give partial information about these dynamical systems, there are differences of meaning and emphasis. Signs on edges in a DSR graph, crucial to much of the associated theory, are analogous to directions on edges in a Petri net: for example for an irreversible chemical reaction, an arc from a substrate to reaction vertex in the Petri net would correspond to a negative, undirected, edge in the SR/DSR graph. Unlike SR/DSR graphs, markings (i.e. vertex-labellings representing the current state) are often considered an intrinsic component of Petri nets. 

Apart from formal variations between Petri nets and SR/DSR graphs, differences in the notions of state and time lead naturally to differences in the questions asked. Most current work using SR/DSR graphs aims to inform on the existence, nature, and stability of limit sets of the associated local semiflows. Analogous questions are certainly possible with Petri nets, for example questions about the existence of stationary probability distributions for stochastic Petri nets \cite{bause}. However, much study, for example about reachability, safeness and boundedness, concerns the structure of the state space itself, and has no obvious analogy in the SR/DSR case. This explains to some extent the importance of markings in the study of Petri nets; in the case of SR/DSR graphs, the underlying space is generally assumed to have a simple structure, and the aim is to draw conclusions which are largely independent of initial conditions.

\subsection{SR and DSR graphs associated with CRNs}

SR and DSR graphs can be associated with arbitrary CRNs and more general dynamical systems \cite{banajicraciun,banajicraciun2}. For example, the construction extends to situations where there are modulators of reactions which do not themselves participate in reactions, and where substrates occur on both sides of a reaction. Here, for simplicity, the construction is presented for an N1C reaction system with stoichiometric matrix $\Gamma$. Assume that there is a set of substrates $V_S = \{S_1, \ldots, S_n\}$, having concentrations $x_1, \ldots, x_n$, and reactions $V_R = \{R_1, \ldots, R_m\}$ occurring at rates $v_1, \ldots, v_m$. The labels in $V_S$ and $V_R$ will be used to refer both to the substrate/reaction, and the associated substrate/reaction vertices. 
\begin{itemize}
\item If $\Gamma_{ij} \not = 0$ (i.e. there is net production or consumption of $S_i$ reaction $j$), and also $\frac{\partial v_j}{\partial x_i}$ is not identically zero, i.e. the concentration of substrate $i$ affects the rate of reaction $j$, then there is an undirected edge $\{S_i, R_j\}$. 
\item If $\Gamma_{ij} \not = 0$, but $\frac{\partial v_j}{\partial x_i} \equiv 0$, then the edge $\{S_i, R_j\}$ has only R-to-S direction.
\end{itemize}
The edge $\{S_i, R_j\}$ has the sign of $\Gamma_{ij}$ and label $|\Gamma_{ij}|$. Thus the labels on edges are just stoichiometries, while the signs on edges encode information on which substrates occur together on each side of a reaction. A more complete discussion of the meanings of edge-signs in terms of ``activation'' and ``inhibition'' is presented in \cite{banajicraciun2}. Note that in the context of N1C reaction systems, the following features (which are reasonably common in the more general setting) do not occur: edges with only R-to-S direction; multiple edges between a vertex pair; and edges with edge-label $\infty$. 

SR/DSR graphs can be uniquely associated with (\ref{basic}), (\ref{basicCFSTR}), or (\ref{basicoutflows}): in the case of (\ref{basicCFSTR})~and~(\ref{basicoutflows}), the inflows and outflows are ignored, and the SR/DSR graph is just that derived from the associated system (\ref{basic}). The construction is most easily visualised via an example. Consider, first, the simple system of two reactions: 
\begin{equation}
A + B \rightleftharpoons C, \qquad A \rightleftharpoons B \label{eqSR1}
\end{equation}
This has SR graph, shown in Figure~\ref{figSR1},~{\em left}. If all substrates affect the rates of reactions in which they participate then this is also the DSR graph for the reaction. If, now, the second reaction is irreversible, i.e. one can write
\begin{equation}
A + B \rightleftharpoons C, \qquad A \rightarrow B, \label{eqSR2}
\end{equation}
and consequently the concentration of $B$ does not affect the rate of the second reaction\footnote{Note that this is usually, but not always, implied by irreversibility: it is possible for the product of an irreversible reaction to influence a reaction rate.}, then the SR graph remains the same, losing information about irreversibility, but the DSR graph now appears as in Figure~\ref{figSR1}~{\em right}.

\begin{figure}[h]
\begin{center}
\begin{minipage}{0.3\textwidth}
\begin{tikzpicture}[domain=0:4,scale=0.6]
    
\node[draw,shape=circle] at (1,4) {$A$};
\node[draw,shape=rectangle] at (4,4) {$R_1$};
\draw[-, dashed, thick] (1.8,4) -- (3.2,4);
\node[draw,shape=circle] at (2.5,2.5) {$C$};
\draw[-, thick] (3,3) -- (3.5, 3.5);
\node[draw,shape=circle] at (4,1) {$B$};
\node[draw,shape=rectangle] at (1,1) {$R_2$};
\draw[-, thick] (1.6,1) -- (3.2,1);
\draw[-, dashed, thick] (1,1.8) -- (1,3.2);
\draw[-, dashed, thick] (4,1.8) -- (4,3.2);

\node at (0.6,2.5) {$\scriptstyle{1}$};
\node at (4.4,2.5) {$\scriptstyle{1}$};
\node at (2.5,0.6) {$\scriptstyle{1}$};
\node at (2.5,4.4) {$\scriptstyle{1}$};
\node at (3.1,3.5) {$\scriptstyle{1}$};
\end{tikzpicture}
\end{minipage}
\hspace{2cm}
\begin{minipage}{0.3\textwidth}
\begin{tikzpicture}[domain=0:4,scale=0.6]
\node[draw,shape=circle] at (1,4) {$A$};
\node[draw,shape=rectangle] at (4,4) {$R_1$};
\draw[-, dashed, thick] (1.8,4) -- (3.2,4);
\node[draw,shape=circle] at (2.5,2.5) {$C$};
\draw[-, thick] (3,3) -- (3.5, 3.5);

\node[draw,shape=circle] at (4,1) {$B$};
\node[draw,shape=rectangle] at (1,1) {$R_2$};
\draw[->, thick] (1.6,1) -- (3.2,1);

\draw[-, dashed, thick] (1,1.8) -- (1,3.2);

\draw[-, dashed, thick] (4,1.8) -- (4,3.2);

\node at (0.6,2.5) {$\scriptstyle{1}$};
\node at (4.4,2.5) {$\scriptstyle{1}$};
\node at (2.5,0.6) {$\scriptstyle{1}$};
\node at (2.5,4.4) {$\scriptstyle{1}$};
\node at (3.1,3.5) {$\scriptstyle{1}$};
\end{tikzpicture}
\end{minipage}
\end{center}
\caption{\label{figSR1} {\em Left.} The SR (and DSR graph) for reaction system (\ref{eqSR1}). Negative edges are depicted as dashed lines, while positive edges are bold lines. This convention will be followed throughout. {\em Right.} The DSR graph for reaction system (\ref{eqSR2}), that is, when $B$ is assumed not to affect the rate of the second reaction.}
\end{figure}
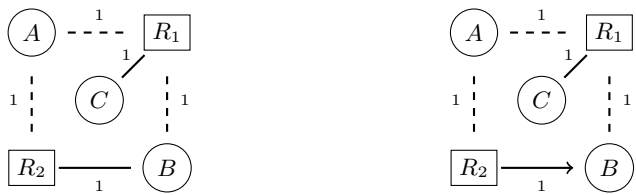

\section{Paths and cycles in SR and DSR graphs}

In the usual way, {\bf cycles} in SR (DSR) graphs are minimal undirected (directed) paths from some vertex to itself. All paths have a sign, defined as the product of signs of edges in the path. Given any subgraph $E$, its size (or length, if it is a path) $|E|$ is the number of edges in $E$. Paths of length two will be called {\bf short} paths. Any path $E$ of even length also has a {\bf parity}
\[
P(E) = (-1)^{|E|/2}\mathrm{sign}(E).
\]
A cycle $C$ is an {\bf e-cycle} if $P(C) = 1$, and an {\bf o-cycle} otherwise. Given a cycle $C$ containing edges $e_1, e_2, \ldots, e_{2r}$ such that $e_i$ and $e_{(i\bmod 2r) + 1}$ are adjacent for each $i =1,\ldots,2r$, define:
\[
\mathrm{stoich}(C) = \left|\prod_{i = 1}^{r}\mathrm{lbl}(e_{2i-1}) - \prod_{i = 1}^{r}\mathrm{lbl}(e_{2i})\right|\,.
\]
Note that this definition is independent of the starting point chosen on the cycle. A cycle with $\mathrm{stoich}(C) = 0$ is termed an {\bf s-cycle}. 

An {\bf S-to-R path} in an SR graph is a non-self-intersecting path between an S-vertex and an R-vertex. {\bf R-to-R paths} and {\bf S-to-S paths} are similarly defined, though in these cases the initial and terminal vertices may coincide. Any cycle is both an R-to-R path and an S-to-S path. Two cycles have {\bf S-to-R intersection} if each component of their intersection is an S-to-R path. This definition can be generalised to DSR graphs in a natural way, but to avoid technicalities regarding cycle orientation, the reader is referred to \cite{banajicraciun2} for the details. Further notation will be presented as needed.

Returning to the family of CRNs in (\ref{eqfamily}), these give SR graphs shown in Figure~\ref{SRsequence}. If all reactants can influence the rates of reactions in which they participate, then these are also their DSR graphs (otherwise some edges may become directed). Each SR graph contains a single cycle, which is an e-cycle (resp. o-cycle) if $n$ is odd (resp. even). These cycles all fail to be s-cycles because of the unique edge-label of $2$.

\begin{figure}[h]
\begin{minipage}{\textwidth}
\begin{minipage}{0.3\textwidth}
\begin{tikzpicture}[domain=0:12,scale=0.48]


\node at (0,0) {$\mathbf{SYS\,\,}1$};

\path (90: 2cm) coordinate (A2);
\path (30: 2cm) coordinate (R2);
\path (-30: 2cm) coordinate (A3);
\path (270: 2cm) coordinate (R3);
\path (210: 2cm) coordinate (A1);
\path (150: 2cm) coordinate (R1);

\path (-3.5, 1.5) coordinate (B1);
\path (3.5, 1.5) coordinate (B2);

\draw (A1) circle (6pt);
\draw (A2) circle (6pt);
\draw (A3) circle (6pt);

\fill (R1) circle (4pt);
\fill (R2) circle (4pt);
\fill (R3) circle (4pt);

\draw (B1) circle (6pt);
\draw (B2) circle (6pt);

\path (90-20: 2cm) coordinate (A2end);
\path (-30-20: 2cm) coordinate (A3end);
\path (210-20: 2cm) coordinate (A1end);

\path (90+20: 2cm) coordinate (A2end1);
\path (-30+20: 2cm) coordinate (A3end1);
\path (210+20: 2cm) coordinate (A1end1);

\draw[-, dashed, line width=0.04cm] (A2end)  arc (90-20:90-45:2cm);
\draw[-, dashed, line width=0.04cm] (A3end)  arc (-30-20:-30-45:2cm);
\draw[-, dashed, line width=0.04cm] (A1end)  arc (210-20:210-45:2cm);

\draw[-, dashed, line width=0.04cm] (A2end1)  arc (90+20:90+45:2cm);
\draw[-, dashed, line width=0.04cm] (A3end1)  arc (-30+20:-30+45:2cm);
\draw[-, line width=0.04cm] (A1end1)  arc (210+20:210+45:2cm);

\draw[-, line width=0.04cm] (1.8-4, 1.1) -- (1.1-4, 1.3);
\draw[-, line width=0.04cm] (2.2, 1.1) -- (2.9, 1.3);

\node at (-0.8, -1.2) {$2$};

\end{tikzpicture}
\end{minipage}
\begin{minipage}{0.3\textwidth}

\begin{tikzpicture}[domain=-6:6,scale=0.22]

\path (0,0) coordinate (origin);

\node at (0,0) {$\mathbf{SYS\,\,}2$};

\path (45: 6cm) coordinate (R2);
\path (-45: 6cm) coordinate (R3);
\path (135: 6cm) coordinate (R1);
\path (225: 6cm) coordinate (R4);

\path (90: 6cm) coordinate (A2);

\path (0: 6cm) coordinate (A3);
\path (270: 6cm) coordinate (A4);
\path (180: 6cm) coordinate (A1);

\draw (A1) circle (12pt);
\draw (A2) circle (12pt);
\draw (A3) circle (12pt);
\draw (A4) circle (12pt);

\fill (R1) circle (8pt);
\fill (R2) circle (8pt);
\fill (R3) circle (8pt);
\fill (R4) circle (8pt);

\path (90-12: 6cm) coordinate (A2end);
\path (0-12: 6cm) coordinate (A3end);
\path (270-12: 6cm) coordinate (A4end);
\path (180-12: 6cm) coordinate (A1end);

\path (90+12: 6cm) coordinate (A2end1);
\path (0+12: 6cm) coordinate (A3end1);
\path (270+12: 6cm) coordinate (A4end1);
\path (180+12: 6cm) coordinate (A1end1);

\draw[-, dashed, line width=0.04cm] (A2end) arc (90-12:90-35:6cm);
\draw[-, dashed, line width=0.04cm] (A3end) arc (-12:-35:6cm);
\draw[-, dashed, line width=0.04cm] (A4end) arc (270-12:270-35:6cm);
\draw[-, dashed, line width=0.04cm] (A1end) arc (180-12:180-35:6cm);

\draw[-, dashed, line width=0.04cm] (A2end1) arc (90+12:90+35:6cm);
\draw[-, dashed, line width=0.04cm] (A3end1) arc (12:35:6cm);
\draw[-, dashed, line width=0.04cm] (A4end1) arc (270+12:270+35:6cm);
\draw[-, line width=0.04cm] (A1end1) arc (180+12:180+35:6cm);

\path (-8,5) coordinate (B1);
\path (8,5) coordinate (B2);
\path (8,-5) coordinate (B3);

\draw (B1) circle (12pt);
\draw (B2) circle (12pt);
\draw (B3) circle (12pt);

\draw[-, line width=0.04cm] (-5.1, 4.5) -- (-6.9,4.8);
\draw[-, line width=0.04cm] (5.1, 4.5) -- (6.9,4.8);
\draw[-, line width=0.04cm] (5.1, -4.5) -- (6.9,-4.8);

\node at (-4.6, -2) {$2$};

\end{tikzpicture}
\end{minipage}
\begin{minipage}{0.3\textwidth}

\begin{tikzpicture}[domain=-6:6,scale=0.22]

\path (0,0) coordinate (origin);

\node at (0,0) {$\mathbf{SYS\,\,}3$};

\path (90+72: 6cm) coordinate (A2);
\path (90: 6cm) coordinate (A3);
\path (90+4*72: 6cm) coordinate (A4);
\path (90+3*72: 6cm) coordinate (A5);
\path (90+2*72: 6cm) coordinate (A1);

\path (54+72: 6cm) coordinate (R2);
\path (54: 6cm) coordinate (R3);
\path (54+4*72: 6cm) coordinate (R4);
\path (54+3*72: 6cm) coordinate (R5);
\path (54+2*72: 6cm) coordinate (R1);

\draw (A1) circle (12pt);
\draw (A2) circle (12pt);
\draw (A3) circle (12pt);
\draw (A4) circle (12pt);
\draw (A5) circle (12pt);

\fill (R1) circle (8pt);
\fill (R2) circle (8pt);
\fill (R3) circle (8pt);
\fill (R4) circle (8pt);
\fill (R5) circle (8pt);

\path (90+1*72-11: 6cm) coordinate (A2end);
\path (90-11: 6cm) coordinate (A3end);
\path (90+4*72-11: 6cm) coordinate (A4end);
\path (90+3*72-11: 6cm) coordinate (A5end);
\path (90+2*72-11: 6cm) coordinate (A1end);

\path (90+11*72+11: 6cm) coordinate (A2end1);
\path (90+11: 6cm) coordinate (A3end1);
\path (90+4*72+11: 6cm) coordinate (A4end1);
\path (90+3*72+11: 6cm) coordinate (A5end1);
\path (90+2*72+11: 6cm) coordinate (A1end1);

\draw[-, dashed, line width=0.04cm] (A2end) arc (90+72-11:90+72-27:6cm);
\draw[-, dashed, line width=0.04cm] (A3end) arc (90-11:90-27:6cm);
\draw[-, dashed, line width=0.04cm] (A4end) arc (90-72-11:90-72-27:6cm);
\draw[-, dashed, line width=0.04cm] (A5end) arc (90+3*72-11:90+3*72-27:6cm);
\draw[-, dashed, line width=0.04cm] (A1end) arc (90+2*72-11:90+2*72-27:6cm);

\draw[-, dashed, line width=0.04cm] (A2end1) arc (90+72+11:90+72+27:6cm);
\draw[-, dashed, line width=0.04cm] (A3end1) arc (90+11:90+27:6cm);
\draw[-, dashed, line width=0.04cm] (A4end1) arc (90-72+11:90-72+27:6cm);
\draw[-, dashed, line width=0.04cm] (A5end1) arc (90+3*72+11:90+3*72+27:6cm);
\draw[-, line width=0.04cm] (A1end1) arc (90+2*72+11:90+2*72+27:6cm);

\path (-9, -2.5) coordinate (B1);
\path (-7, 5.5) coordinate (B2);
\path (7, 5.5) coordinate (B3);
\path (9, -2.5) coordinate (B4);

\draw (B1) circle (12pt);
\draw (B2) circle (12pt);
\draw (B3) circle (12pt);
\draw (B4) circle (12pt);

\draw[-, line width=0.04cm] (-4.3, 5) -- (-6,5.3);
\draw[-, line width=0.04cm] (4.3, 5) -- (6,5.3);

\draw[-, line width=0.04cm] (-6.5, -2) -- (-8,-2.3);
\draw[-, line width=0.04cm] (6.5, -2) -- (8,-2.3);

\node at (-1.6, -4.8) {$2$};

\end{tikzpicture}
\end{minipage}
\end{minipage}

\caption{\label{SRsequence} The structure of the SR graphs for {\bf SYS} $1, 2$ and $3$ in (\ref{eqfamily}). For simplicity vertices are unlabelled, but filled circles are S-vertices while open circles are R-vertices. Unlabelled edges have edge-label $1$. }
\end{figure}
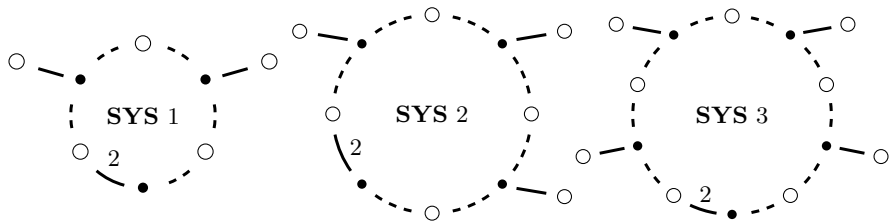

\section{Existing results on CRNs, injectivity and monotonicity}

\subsection{Injectivity and multiple equilibria}
A function $f:X \to \mathbb{R}^n$ is {\bf injective} if for any $x, y \in X$, $f(x) = f(y)$ implies $x = y$. Injectivity of a vector field on some domain is sufficient to guarantee that there can be no more than one equilibrium on this domain. Define the following easily computable condition on an SR or DSR graph:\\

{\bf Condition~($*$)}: All e-cycles are s-cycles, and no two e-cycles have S-to-R intersection.\\

Note that if an SR/DSR graph has no e-cycles, then Condition~($*$) is trivially fulfilled. A key result in \cite{banajicraciun} was: 

\begin{proposition}
\label{inj1}
An N1C reaction system of the form (\ref{basicoutflows}) with SR graph satisfying Condition~($*$) is injective. 
\end{proposition}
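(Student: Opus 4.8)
The plan is to reduce the claim to a statement about the sign of certain minors and then read that sign off from the cycle structure of the SR graph. For a system of the form (\ref{basicoutflows}) the right-hand side is $f(x) = x_{in} + \Gamma v(x) - Q(x)$, whose Jacobian is $Df(x) = \Gamma V - D$, where $V = [V_{ji}]$ is as defined above and $D = \mathrm{diag}(q_i'(x_i))$ is a \emph{positive} diagonal matrix since each $q_i' > 0$. By N1C the diagonal of $\Gamma V$ is nonpositive, so $-Df(x) = D - \Gamma V$ has strictly positive diagonal. Because the domain $\mathbb{R}^n_{\geq 0}$ is a rectangular region, injectivity of $f$ follows from the Gale--Nikaido theorem once we show that $-Df(x) = D - \Gamma V$ is a P-matrix for every $x$, i.e. that all of its principal minors are strictly positive. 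Thus the whole problem becomes: \emph{show that, under N1C and Condition~($*$), every principal minor of $D - \Gamma V$ is positive.} Note that each principal minor is indexed by a subset $\alpha$ of the S-vertices, and the cycles relevant to it are exactly those of the full SR graph lying on S-vertices in $\alpha$; hence Condition~($*$) is automatically inherited by every such minor, and it suffices to treat one $\alpha$ at a time.

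First I would fix $\alpha \subseteq \{1, \ldots, n\}$ and expand the minor $\det\big((D-\Gamma V)[\alpha]\big) = \det\big(D_\alpha - \Gamma[\alpha,:]\,V[:,\alpha]\big)$. Since $D_\alpha$ is diagonal, grouping the retained diagonal entries and applying the Cauchy--Binet formula to the $\Gamma V$ block writes this minor as a sum of terms, each a product of some retained positive factors $q_i'$ with products of the form $\det(\Gamma[\beta,\gamma])\,\det(V[\gamma,\beta])$ over matched substrate/reaction index sets $\beta,\gamma$ with $|\beta|=|\gamma|$. The next step is the dictionary between these algebraic terms and subgraphs of the SR graph: each surviving term corresponds to a disjoint union of cycles (with the remaining indices supplied by diagonal ``short path'' contributions), and the \emph{sign} of the term is governed by the signs and parities $P(C)=(-1)^{|C|/2}\mathrm{sign}(C)$ of those cycles. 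Under N1C, where $\Gamma_{ij}V_{ji}\le 0$ and $\Gamma_{ij}=0 \Rightarrow V_{ji}=0$, the baseline contribution (no cycles, all indices from the diagonal) is $\prod_{i\in\alpha} q_i' > 0$, and every term assembled purely from o-cycles carries this same favourable sign.

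Condition~($*$) then serves precisely to eliminate the terms that could reverse this sign, namely those involving e-cycles. The point is that a single e-cycle $C$ contributes to the expansion only through a factor that, after combining it with the competing short-path configurations on the same vertices, is proportional to $\mathrm{stoich}(C)$; since Condition~($*$) forces every e-cycle to be an s-cycle, $\mathrm{stoich}(C)=0$ and each such contribution cancels. The remaining danger is configurations built from two or more e-cycles, whose overlapping supports could, on re-decomposing their union, generate an even subgraph whose contribution need not vanish term-by-term. This is exactly where ``no two e-cycles have S-to-R intersection'' is used: it guarantees that distinct e-cycles sharing indices cannot recombine into a sign-reversing contribution. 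After all cancellations every nonzero term is therefore nonnegative while the diagonal term remains strictly positive, so each principal minor is positive, $D - \Gamma V$ is a P-matrix, and injectivity follows.

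I expect the main obstacle to be the sign bookkeeping in the second and third steps: correctly matching the permutation signs produced by the determinant expansion against the cycle parities and the edge-sign conventions of the SR graph, and verifying that the S-to-R intersection hypothesis is exactly what blocks the sign-reversing recombinations of e-cycles. This combinatorial core is the real content of the result; the reduction to a P-matrix question and the vanishing of the s-cycle terms are comparatively routine.
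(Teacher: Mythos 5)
Your proposal takes essentially the same route as the paper's proof, which is given by citation to Theorem~1 of \cite{banajicraciun}: injectivity is obtained there, just as you outline, via the Gale--Nikaido theorem on the rectangular domain, by showing that under N1C kinetics $-\Gamma V$ is a $P_0$-matrix (so that $D - \Gamma V$ with $D$ the positive diagonal of outflow derivatives is a $P$-matrix), with the principal minors expanded via Cauchy--Binet and the surviving terms controlled exactly by the cycle mechanisms you identify --- cancellation proportional to $\mathrm{stoich}(C)$ for e-cycles that are s-cycles, and the no-S-to-R-intersection condition blocking sign-reversing recombinations of multiple e-cycles. The sign bookkeeping you defer is precisely the technical content of that reference, so your outline is the correct one and matches it step for step.
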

\begin{proof}
See Theorem 1 in \cite{banajicraciun}.
\end{proof}

In \cite{banajicraciun2} this result was strengthened considerably and extended beyond CRNs. In the context of CRNs with N1C kinetics it specialises to:

\begin{proposition}
\label{inj2}
An N1C reaction system of the form (\ref{basicoutflows}) with DSR graph satisfying Condition~($*$) is injective. 
\end{proposition}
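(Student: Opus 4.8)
The plan is to establish nonsingularity of the Jacobian throughout the domain and then pass from nonsingularity to injectivity by a mean-value argument; the whole point will be that the directional information in the DSR graph controls exactly the matrix class that arises. The Jacobian of (\ref{basicoutflows}) is $J = \Gamma V - D$, where $V = [\partial v_j/\partial x_i]$ carries the N1C sign pattern and $D = \mathrm{diag}(\partial q_i/\partial x_i)$ is a strictly positive diagonal matrix. If $f$ denotes the right-hand side of (\ref{basicoutflows}) and $f(x) = f(y)$ with $x \neq y$, then writing $f(x) - f(y) = \left(\int_0^1 J(y + s(x-y))\,ds\right)(x-y)$ exhibits a singular matrix of the form $\Gamma \bar V - \bar D$, where $\bar V$ again has the N1C sign pattern and $\bar D > 0$. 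Crucially, averaging cannot create nonzero entries: if $\partial v_j/\partial x_i \equiv 0$ then $\bar V_{ji} = 0$, so the support of $\bar V$ is confined to those edges carrying S-to-R direction in the DSR graph. Injectivity therefore follows once I show that every matrix $\Gamma W - D$ is nonsingular, as $W$ ranges over the N1C qualitative class supported on the S-to-R-directed edges and $D$ over positive diagonal matrices.

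Next I would expand $\det(\Gamma W - D)$ as a signed sum over permutations and group the terms by the cycle structure of each permutation. Fixed points contribute either a factor $-d_k$ or a diagonal entry of $\Gamma W$, while each nontrivial permutation cycle on the substrate indices corresponds to a closed alternating walk $S_{i_1} R_{j_1} S_{i_2} R_{j_2}\cdots$ in the graph, contributing a product of $\Gamma$- and $W$-entries taken around the walk. Since $D > 0$, the pure-diagonal term has sign $(-1)^n$ and is nonzero; the strategy is to show that, under Condition~($*$), every remaining term either shares this sign or cancels, so that $(-1)^n\det(\Gamma W - D)$ is strictly positive and $J$ is nonsingular. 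The parity classification of cycles is designed precisely for this: the N1C sign relations force the obstructive, opposite-sign contributions to come exactly from e-cycles, so controlling e-cycles controls the sign of the determinant.

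The step that delivers the improvement over Proposition~\ref{inj1} is the observation that a cyclic term is nonzero only when every $W$-factor around its walk is nonzero. A factor $W_{ji}$ is nonzero only on an edge carrying S-to-R direction, so the walks that survive in the expansion are exactly those SR-cycles that lift to genuine directed cycles of the DSR graph. Consequently it is enough to impose Condition~($*$) on the directed cycles of the DSR graph rather than on all undirected cycles of the SR graph: being s-cycles makes the two label-products around each such e-cycle equal, so its contribution cancels, while the absence of S-to-R intersection between two e-cycles prevents them from combining into a surviving opposite-sign term. This is a strictly weaker hypothesis than Condition~($*$) on the SR graph, which is why the present statement strengthens Proposition~\ref{inj1}.

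The main obstacle is the cancellation bookkeeping carried out with orientations rather than undirected cycles: one must verify that the e-cycle terms really do pair up and cancel (or are absorbed into same-sign terms) once ``cycle'' is read as ``directed cycle,'' and that no two admissible e-cycles without S-to-R intersection leave an uncancelled opposite-sign remainder. This is exactly the orientation-sensitive accounting that the paper defers in its treatment of cycle intersection in DSR graphs. Rather than reproduce it in full, the economical route I would take is to derive the result as a special case of the general matrix-theoretic determinant theorem of \cite{banajicraciun2}, checking only that the DSR-graph hypotheses here translate into that theorem's qualitative-class and determinant-sign conditions; the reduction to $\Gamma W - D$ with support on S-to-R-directed edges established above is precisely what makes that translation routine.
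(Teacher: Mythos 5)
Your proposal is correct and takes essentially the same route as the paper: the paper's entire proof is an appeal to Corollary~4.2 of \cite{banajicraciun2}, which is exactly where you land after your (accurate) reduction to nonsingularity of all matrices $\Gamma W - D$ with $W$ in the N1C qualitative class supported on the S-to-R-directed edges. Your mean-value and determinant-expansion scaffolding is sound and simply makes explicit the translation that the paper leaves implicit in its citation, while correctly deferring the orientation-sensitive cancellation bookkeeping to the general theorem.
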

\begin{proof}
See Corollary 4.2 in \cite{banajicraciun2}.
\end{proof}

Proposition~\ref{inj2} is stronger than Proposition~\ref{inj1} because irreversibility is taken into account. In the case without outflows (\ref{basic}), attention must be restricted to some fixed stoichiometric class. The results then state that no stoichiometry class can contain more than one nondegenerate equilibrium in the interior of the positive orthant \cite{banajicraciun2,craciun2}. (In this context, a degenerate equilibrium is defined to be an equilibrium with a zero eigenvalue and corresponding eigenvector lying in the stoichiometric subspace.) The case with partial outflows was also treated.

\subsection{Monotonicity}

A closed, convex, solid, pointed cone $K \subset \mathbb{R}^n$ is termed a {\bf proper cone} \cite{berman}. The reader is referred to \cite{berman} for basic definitions related to cones. Any proper cone defines a partial order on $\mathbb{R}^n$ as follows: given two points $x, y \in \mathbb{R}^n$:
\begin{enumerate}
\item $x \geq y \Leftrightarrow x - y \in K$;
\item $x > y \Leftrightarrow x \geq y$ and $x \not = y$;
\item $x \gg y \Leftrightarrow x - y \in \mathrm{int}\,K$.
\end{enumerate}
An extremal ray is a one dimensional face of a cone. A proper cone with exactly $n$ extremal rays is termed {\bf simplicial}. Simplicial cones have the feature that unit vectors on the extremal rays can be chosen as basis vectors for a new coordinate system. Consider some linear subspace $\mathcal{A} \subset \mathbb{R}^n$. Then any closed, convex, pointed cone $K \subset \mathcal{A}$ with nonempty interior in $\mathcal{A}$ is termed $\mathcal{A}$-proper. If, further, $K$ has exactly $\mathrm{dim}(\mathcal{A})$ extremal rays, then $K$ is termed $\mathcal{A}$-simplicial. 

Consider some local semiflow $\phi$ defined on $X \subset \mathbb{R}^n$. Assume that there is some linear subspace $\mathcal{A} \subset \mathbb{R}^n$ with a coset $\mathcal{A}^{'}$ with nonempty intersection with $X$, and such that $\phi$ leaves $\mathcal{A}^{'} \cap X$ invariant. Suppose further that there is an $\mathcal{A}$-proper cone $K$ such that for all $x, y \in \mathcal{A}^{'} \cap X$, $x > y \Rightarrow \phi_t(x) > \phi_t(y)$ for all values of $t \geq 0$ such that $\phi_t(x)$ and $\phi_t(y)$ are defined. Then we say that $\left.\phi\right|_{\mathcal{A}^{'} \cap X}$ {\bf preserves $K$}, and that $\left.\phi\right|_{\mathcal{A}^{'} \cap X}$ is {\bf monotone}. If, further, $x > y \Rightarrow \phi_t(x) \gg \phi_t(y)$ for all values of $t > 0$ such that $\phi_t(x)$ and $\phi_t(y)$ are defined, then $\left.\phi\right|_{\mathcal{A}^{'} \cap X}$ is {\bf strongly monotone}. A local semiflow is monotone with respect to the nonnegative orthant if and only if the Jacobian of the vector field has nonnegative off-diagonal elements, in which case the vector field is termed {\bf cooperative}.  

Returning to (\ref{basicCFSTR}), in the case $q=0$, all stoichiometry classes are invariant, while if $q > 0$, there is a globally attracting stoichiometry class. Conditions for monotonicity of $\phi$ restricted to invariant subspaces of $\mathbb{R}^n$ were discussed extensively in \cite{banajidynsys}. Here the immediate aim is to develop graph-theoretic corollaries of one of these results, and to raise some interesting open questions. 

Given a vector $y \in \mathbb{R}^n$, define
\[
\mathcal{Q}_1(y) \equiv \{v \in \mathbb{R}^n\,|\, v_iy_i \geq 0\}.
\]
A matrix $\Gamma$ is {\bf R-sorted} (resp. {\bf S-sorted}) if any two distinct columns (resp. rows) $\Gamma_i$ and $\Gamma_j$ of $\Gamma$ satisfy $\Gamma_i \in \mathcal{Q}_1(-\Gamma_j)$. A matrix $\Gamma^{'}$ is {\bf R-sortable} (resp. {\bf S-sortable}) if there exists a signature matrix $D$ such that $\Gamma \equiv \Gamma^{'}D$ (resp. $\Gamma \equiv D\Gamma^{'}$) is well-defined, and is R-sorted (resp. S-sorted). 

\begin{proposition}
\label{S-simplic}
Consider a system of N1C reactions of the form (\ref{basicCFSTR}) whose stoichiometric matrix $\Gamma$ is R-sortable, and whose reaction vectors $\{\Gamma_k\}$ are linearly independent. Let $\mathcal{S} = \mathrm{Im}(\Gamma)$. Then there is an $\mathcal{S}$-simplicial cone $K$ preserved by the system restricted to any invariant stoichiometry class, such that each reaction vector is collinear with an extremal ray of $K$. 
\end{proposition}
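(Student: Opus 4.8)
The plan is to exhibit $K$ concretely as the cone generated by the suitably signed reaction vectors, to rewrite the dynamics on an invariant stoichiometry class in the linear coordinates for which these vectors form a basis of $\mathcal{S}$, and to verify that the resulting reduced system on $\mathbb{R}^m$ is cooperative; monotonicity with respect to $K$ then follows from the cone-preservation theory of \cite{banajidynsys}.

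First I would use R-sortability to fix a signature matrix $D$ so that $\Gamma D$ is R-sorted, and simultaneously replace $v$ by $Dv$. As noted after (\ref{basic}), this leaves the dynamical system unchanged, and one checks at once that it preserves the N1C property: flipping the sign of column $j$ of $\Gamma$ flips the sign of row $j$ of $V$, so each product $\Gamma_{ij}V_{ji}$ and the implication $\Gamma_{ij}=0\Rightarrow V_{ji}=0$ are unaffected. Writing $\Gamma_1, \ldots, \Gamma_m$ for the columns of the sorted matrix, these are linearly independent and number $m = \dim\mathcal{S}$, so the cone $K \equiv \{\sum_k \lambda_k \Gamma_k : \lambda_k \geq 0\}$ is closed (a finitely generated cone), convex, pointed, has nonempty interior in $\mathcal{S}$, and has exactly $m$ extremal rays $\mathbb{R}_{\geq 0}\Gamma_k$; that is, $K$ is $\mathcal{S}$-simplicial and each reaction vector is collinear with an extremal ray.

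Next I would introduce coordinates adapted to $K$. Fix an invariant stoichiometry class; when $q>0$, invariance forces this to be the class through $x_{in}$, so I take the base point $x_0 = x_{in}$, while for $q=0$ every class is invariant and $x_0$ may be any of its points. Writing each $x$ in the class as $x = x_0 + \Gamma z$ with $z \in \mathbb{R}^m$ is legitimate since $\Gamma$ has full column rank, and $z \geq 0$ componentwise is exactly the condition $x - x_0 \in K$. Substituting into (\ref{basicCFSTR}) and using injectivity of $\Gamma$ yields the reduced system $\dot z = -qz + v(x_0 + \Gamma z)$ (with $q=0$ recovering (\ref{basic})), whose Jacobian is $-qI + V\Gamma$; its off-diagonal entries coincide with those of $V\Gamma$. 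The crucial computation is then the sign of $(V\Gamma)_{kl} = \sum_i V_{ki}\Gamma_{il}$ for $k \neq l$: by N1C the sign of $V_{ki}$ is opposite to that of $\Gamma_{ik}$ (and $V_{ki}=0$ when $\Gamma_{ik}=0$), while R-sortedness gives $\Gamma_{ik}\Gamma_{il} \leq 0$, so each summand $V_{ki}\Gamma_{il} \geq 0$ and hence $(V\Gamma)_{kl} \geq 0$. Thus the reduced field is cooperative.

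Finally, cooperativity of the reduced field on the convex (hence order-convex) domain $\{z : x_0 + \Gamma z \geq 0\}$ makes its flow monotone with respect to the nonnegative orthant of $\mathbb{R}^m$, which pulls back to monotonicity of the original flow with respect to $K$ on the stoichiometry class. I expect the main obstacle to lie not in the sign bookkeeping, which is a direct consequence of combining N1C with R-sortedness, but in invoking the monotonicity theory for a cone lying in a proper subspace $\mathcal{S}$ rather than for the full orthant of $\mathbb{R}^n$: specifically, in checking that the reduction to the invariant class and the passage to the $z$-coordinates meet the hypotheses of the relevant theorem of \cite{banajidynsys}, and that the treatment is uniform across the $q=0$ and $q>0$ cases (the flow term $q(x_{in}-x)$ contributing only to the diagonal and so leaving cooperativity intact).
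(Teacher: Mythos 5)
Your proposal is correct, but it takes a genuinely different route from the paper, whose entire proof is a one-line citation: Proposition~\ref{S-simplic} is obtained there as ``a specialisation of Corollary~A7 in \cite{banajidynsys}''. You instead reconstruct the argument from first principles: re-sign via $D$ (checking, correctly, that N1C survives since $\Gamma_{ij}V_{ji}$ is invariant under simultaneous column/row sign flips), take $K$ to be the simplicial cone on the sorted, linearly independent columns, reduce to the class via $x = x_0 + \Gamma z$, and verify $(V\Gamma)_{kl} = \sum_i V_{ki}\Gamma_{il} \geq 0$ for $k \neq l$ by combining $\mathrm{sgn}(V_{ki}) = -\mathrm{sgn}(\Gamma_{ik})$ with $\Gamma_{ik}\Gamma_{il} \leq 0$. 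This sign computation is sound, and it is in fact exactly the mechanism the paper displays only informally, in the worked example after Proposition~\ref{mainthm}, where $J = T'\Gamma VT$ is shown to be cooperative for {\bf SYS}~1; your proof promotes that illustration to the general case. What the paper's citation buys is generality and rigour outsourced: Corollary~A7 of \cite{banajidynsys} handles the technicalities of order-preservation relative to cosets of a subspace and covers settings broader than the full-rank case. What your approach buys is a self-contained and transparent proof. Two small points deserve explicit mention to make it airtight: first, the step from cooperativity of the reduced field to monotonicity of its flow requires the domain $\{z : x_0 + \Gamma z \geq 0\}$ to be p-convex and solutions unique --- both hold here (the domain is convex, the field $C^1$), but this is the standard hypothesis of the Kamke-type theorem in \cite{halsmith} and should be invoked by name; second, your identification of the invariant class for $q>0$ as the one through $x_{in}$ follows since $\frac{d}{dt}(c^Tx) = qc^T(x_{in}-x)$ for any $c \perp \mathcal{S}$, which is worth one line. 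Neither is a gap in substance, only in citation.
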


\begin{proof}
This is a specialisation of Corollary~A7 in \cite{banajidynsys}.
\end{proof}

Systems fulfilling the assumptions of Proposition~\ref{S-simplic}, cannot have periodic orbits intersecting the interior of the positive orthant which are stable on their stoichiometry class. In fact, mild additional assumptions ensure strong monotonicity guaranteeing generic convergence of bounded trajectories to equilibria \cite{hirschsmith,halsmith}. 

\section{Graph-theoretic implications of Proposition~\ref{S-simplic}}
\label{secresults}

Some more notation is needed for the results to follow. The {\bf S-degree} ({\bf R-degree}) of an SR graph $G$ is the maximum degree of its S-vertices (R-vertices). Analogous to the terminology for matrices, a subgraph $E$ is {\bf R-sorted} ({\bf S-sorted}) if each R-to-R (S-to-S) path $E_k$ in $E$ satisfies $P(E_k) = 1$. Note that $E$ is R-sorted if and only if each R-to-R path $E_k$ of length $2$ in $E$ satisfies $P(E_k) = 1$.  

An {\bf R-flip} on a SR/DSR graph $G$ is an operation which changes the signs on all edges incident on some R-vertex in $G$. (This is equivalent to exchanging left and right for the chemical reaction associated with the R-vertex). An {\bf R-resigning} is a sequence of R-flips. An {\bf S-flip} and {\bf S-resigning} can be defined similarly. Given a set of R-vertices $\{R_k\}$ in $G$, the closed neighbourhood of $\{R_k\}$ will be denoted $G_{\{R_k\}}$, i.e., $G_{\{R_k\}}$ is the subgraph consisting of $\{R_k\}$ along with all edges incident on vertices of $\{R_k\}$, and all S-vertices adjacent to those in $\{R_k\}$.

\begin{proposition}
\label{mainthm}
Consider a system of N1C reactions of the form (\ref{basicCFSTR}) with stoichiometric matrix $\Gamma$, and whose reaction vectors $\{\Gamma_k\}$ are linearly independent. Define $\mathcal{S} = \mathrm{Im}(\Gamma)$. Associate with the system the SR graph $G$. Suppose that 
\begin{enumerate}
\item $G$ has S-degree $\leq 2$. 
\item All cycles in $G$ are e-cycles.
\end{enumerate}
Then there is an $\mathcal{S}$-simplicial cone $K$ preserved by the system restricted to any invariant stoichiometry class, such that each reaction vector is collinear with an extremal ray of $K$.  
\end{proposition}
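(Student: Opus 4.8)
The plan is to deduce this statement directly from Proposition~\ref{S-simplic}: since the reaction vectors are already assumed linearly independent, it suffices to prove that the two graph-theoretic hypotheses force the stoichiometric matrix $\Gamma$ to be R-sortable. Once R-sortability is established, Proposition~\ref{S-simplic} delivers the $\mathcal{S}$-simplicial preserved cone with the stated collinearity property verbatim. Thus the entire content of the argument is the implication ``(1) and (2) $\Rightarrow$ $\Gamma$ R-sortable'', a purely combinatorial claim about the SR graph $G$.

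To make this precise I would first reinterpret R-resigning of $G$ as the choice of a signature matrix $D = \mathrm{diag}(s_1, \ldots, s_m)$, $s_j \in \{-1,+1\}$, acting on the columns (R-vertices) of $\Gamma$; the edge $\{S_k, R_j\}$ then acquires sign $s_j\,\mathrm{sgn}(\Gamma_{kj})$. Recall that $\Gamma D$ is R-sorted exactly when every R-to-R path of length two has parity $1$, i.e. when the two edges meeting at the intermediate S-vertex carry opposite signs. Hypothesis (1) is what makes this tractable: an R-to-R path $R_i - S_k - R_j$ of length two can only pass through an S-vertex of degree $\geq 2$, so with S-degree $\leq 2$ every such path runs through a degree-$2$ S-vertex, and each degree-$2$ S-vertex $S_k$ contributes exactly one constraint $s_i s_j = -\,\mathrm{sgn}(\Gamma_{ki})\,\mathrm{sgn}(\Gamma_{kj})$ on the signs of its two R-neighbours.

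I would then package these constraints as a signed-graph balance problem. Let $H$ be the multigraph on the vertex set $V_R$ carrying one edge, weighted $t_k := -\,\mathrm{sgn}(\Gamma_{ki})\,\mathrm{sgn}(\Gamma_{kj})$, for each degree-$2$ S-vertex $S_k$ joining $R_i$ and $R_j$. A valid R-resigning is precisely a vertex signing $s: V_R \to \{\pm 1\}$ with $s_i s_j = t_k$ on every edge of $H$, and by Harary's balance theorem such a signing exists if and only if $\prod t_k = 1$ around every cycle of $H$. The crux is the observation that, because of (1), the cycles of $H$ correspond exactly to the cycles of $G$: a cycle $R_{i_1} - S_{k_1} - R_{i_2} - \cdots - S_{k_\ell} - R_{i_1}$ of $G$ uses only degree-$2$ S-vertices and so is a cycle of $H$, and conversely. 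A short parity computation then shows that for the corresponding cycle $C$ (of length $2\ell$) one has $\prod_{m} t_{k_m} = (-1)^\ell\,\mathrm{sgn}(C) = P(C)$, since $\prod_m \mathrm{sgn}(\Gamma_{k_m i_m})\,\mathrm{sgn}(\Gamma_{k_m i_{m+1}})$ is just the product of the signs of all edges of $C$. Hypothesis (2) asserts $P(C) = 1$ for every cycle, so $H$ is balanced, the required signing exists, and $\Gamma$ is R-sortable.

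The main obstacle I anticipate is the bookkeeping in this last step: carefully verifying that the cycle correspondence is a genuine bijection (in particular that parallel edges of $H$, arising from two degree-$2$ S-vertices sharing both R-neighbours, are accounted for by $4$-cycles of $G$, and that the N1C construction rules out degenerate cases such as multiple edges between a vertex pair or self-paths), and pinning down the sign conventions so that the balance quantity equals $P(C)$ rather than its negative. Once this identity is secured, invoking Harary balance and then Proposition~\ref{S-simplic} completes the argument with no further analytic input.
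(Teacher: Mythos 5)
Your proposal is correct, and its core combinatorial step takes a genuinely different route from the paper's. Both arguments share the same outer skeleton: show that hypotheses (1) and (2) force $\Gamma$ to be R-sortable, then invoke Proposition~\ref{S-simplic} verbatim --- the paper's proof of Proposition~\ref{mainthm} is exactly this reduction. But where you establish R-sortability by encoding the short-path parity constraints as a signed multigraph $H$ on $V_R$ and invoking Harary's balance theorem, the paper proceeds by induction on the number of R-vertices (Lemma~\ref{lem6}): it R-sorts a connected closed neighbourhood $G_{\{R_1,\ldots,R_k\}}$, then either extends the sorting to $R_{k+1}$ with at most one R-flip, or extracts two edge-disjoint short paths of opposite parity from $R_{k+1}$ into the sorted part, which by Lemma~\ref{lem5} (where S-degree $\leq 2$ is used again, to ensure the resulting length-4 path meets the sorted subgraph only at its endpoints) closes up into an o-cycle. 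Your route buys a slightly stronger and cleaner statement: the cycle correspondence between $H$ and $G$ together with the identity $\prod_m t_{k_m} = (-1)^{\ell}\,\mathrm{sign}(C) = P(C)$ shows that, under hypothesis (1), R-sortability is \emph{equivalent} to the e-cycle condition, and it delegates the connectivity bookkeeping to the spanning-forest argument hidden inside Harary's theorem; the paper's induction is by contrast self-contained and constructive, producing the resigning by successive R-flips. The checks you flag as the main obstacle are indeed the right ones, and they all go through: every S-vertex on a cycle of $G$ has degree exactly $2$ in $G$, so cycles of $G$ correspond bijectively to cycles of the multigraph $H$, with digons of $H$ (two degree-2 S-vertices sharing both R-neighbours) matching 4-cycles of $G$; loops in $H$ would require multiple edges between a vertex pair, which the paper explicitly notes cannot occur for N1C systems; and your sign convention $t_k = -\,\mathrm{sgn}(\Gamma_{ki})\,\mathrm{sgn}(\Gamma_{kj})$ is consistent with the paper's normalisation $P(E) = (-1)^{|E|/2}\mathrm{sign}(E)$, under which a length-2 R-to-R path is even exactly when its two edges carry opposite signs, and with the paper's observation that a subgraph is R-sorted if and only if all its length-2 R-to-R paths are even.
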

The key idea of the proof is simple: if the system satisfies the conditions of Proposition~\ref{mainthm}, then the conditions of Proposition~\ref{S-simplic} are also met. In this case, the extremal vectors of the cone $K$ define a local coordinate system on each stoichiometry class, such that the (restricted) system is cooperative in this coordinate system. This interpretation in terms of recoordinatisation is best illustrated with an example. 

Consider {\bf SYS}~1 from (\ref{eqfamily}) with SR graph shown in Figure~\ref{SRsequence}~{\em left}, which can easily be confirmed to satisfy the conditions of Proposition~\ref{mainthm}. Define the following matrices:
\[
\Gamma  = \left(\begin{array}{rrr}-1&0&2\\-1&-1&0\\0&-1&-1\\1&0&0\\0&1&0\end{array}\right),\qquad T = \left(\begin{array}{rrr}-1&0&2\\-1&1&0\\0&1&-1\\1&0&0\\0&-1&0\end{array}\right),\qquad T^{'} = \left(\begin{array}{rrrrr}1&-2&2&0&0\\1&-1&\,\,\,2&\,\,\,0&\,\,\,0\\1&-1&1&0&0\end{array}\right)
\]
$\Gamma$, the stoichiometric matrix, has rank $3$, and so Proposition~\ref{mainthm} applies. Let $x_1, \ldots, x_5$ be the concentrations of the five substrates involved, $v_1, v_2, v_3$ be the rates of the three reactions, and $v_{ij} \equiv \frac{\partial v_i}{\partial x_j}$. Assuming that the system is N1C means that $V \equiv [v_{ij}]$ has sign structure
\[
\mathrm{sgn}(V) = \left(\begin{array}{ccccc}+&+&0&-&0\\0&+&+&0&-\\-&0&+&0&0\end{array}\right)
\]
where $+$ denotes a nonnegative quantity, and $-$ denotes a nonpositive quantity. Consider now any coordinates $y$ satisfying $x = Ty$. Note that $T$ is a re-signed version of $\Gamma$. Choosing some left inverse for $T$, say $T^{'}$, gives $y_1 = x_1-2x_2+2x_3$, $y_2 = x_1-x_2+2x_3$ and $y_3 = x_1-x_2+x_3$. (The choice of $T^{'}$ is not unique, but this does not affect the argument.) Calculation gives that $J = T^{'}\Gamma VT$ has sign structure
\[
\mathrm{sgn}(J) = \left(\begin{array}{ccc}-&+&+\\+&-&+\\+&+&-\end{array}\right)\,,
\]
i.e., restricting to any invariant stoichiometry class, the dynamical system for the evolution of the quantities $y_1, y_2, y_3$ is cooperative. Further, the evolution of $\{x_i\}$ is uniquely determined by the evolution of $\{y_i\}$ via the equation $x = Ty$.\\

It is time to return to the steps leading to the proof of Proposition~\ref{mainthm}. In Lemmas~\ref{lem5}~and~\ref{lem6} below, $G$ is an SR graph with S-degree $\leq 2$. This implies the following: consider R-vertices $v$, $v^{'}$ and $v^{''}$ such that $v \not = v^{'}$ and $v \not = v^{''}$ ($v^{'} = v^{''}$ is possible). Assume there exist two distinct short paths in $G$, one from $v$ to $v^{'}$ and one from $v$ to $v^{''}$. These paths must be edge disjoint, for otherwise there must be an S-vertex lying on both $A$ and $B$, and hence having degree $\geq 3$.

\begin{lemma}
\label{lem5}
Suppose $G$ is a connected SR graph with S-degree $\leq 2$, and has some connected, R-sorted, subgraph $E$ containing R-vertices $v^{'}$ and $v^{''}$. Assume that there is a path $C_1$ of length $4$ between $v^{'}$ and $v^{''}$ containing an R-vertex not in $E$. Then either $C_1$ is even or $G$ contains an o-cycle. 
\end{lemma}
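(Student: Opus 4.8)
The plan is to reduce the statement to a parity computation on cycles obtained by gluing $C_1$ to a path lying inside $E$. The one fact I will use repeatedly is that parity is \emph{multiplicative}: if an even subgraph is written as an edge-disjoint union of even subgraphs, then $P$ of the whole is the product of the $P$'s of the pieces. This is immediate from $P(E)=(-1)^{|E|/2}\mathrm{sign}(E)$, since both $|\cdot|/2 \pmod 2$ and $\mathrm{sign}$ behave additively/multiplicatively over edge-disjoint unions. I will apply this in two guises: splitting an R-to-R path at its internal R-vertices into length-$2$ pieces, and decomposing an even subgraph into simple cycles. First I would dispose of the degenerate case $v'=v''$, in which $C_1$ is itself a $4$-cycle, hence either even or an o-cycle, so the conclusion is trivial. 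Thus assume $v'\neq v''$ and write $C_1 = v'\,s_1\,w\,s_2\,v''$, where $w\notin E$ is the guaranteed internal R-vertex and $s_1,s_2$ are S-vertices.

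Next I would produce the comparison path. Since $E$ is connected and contains the R-vertices $v'$ and $v''$, it contains an R-to-R path $P_E$ between them. Splitting $P_E$ at its internal R-vertices into length-$2$ R-to-R paths, each of which has parity $+1$ because $E$ is R-sorted, multiplicativity gives $P(P_E)=1$; that is, $P_E$ is even.

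The crux, and the step I expect to be the main obstacle, is to show that $C_1$ and $P_E$ are \emph{edge-disjoint}; this is exactly where the hypothesis of S-degree $\leq 2$ enters. The vertex $s_1$ is internal to $C_1$, so it already carries the edges $\{v',s_1\}$ and $\{s_1,w\}$; with degree at most $2$ these are its only edges. As $w\notin E$, the edge $\{s_1,w\}$ is not in $E$, so $s_1$ cannot be an internal vertex of $P_E\subseteq E$, and being an S-vertex it cannot be an endpoint of $P_E$ either; hence $s_1\notin P_E$, and symmetrically $s_2\notin P_E$. Consequently none of the four edges of $C_1$ lies in $P_E$ — the two edges at $w$ because $w\notin E$, and the outer two because $s_1,s_2\notin P_E$. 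Without the degree bound, $C_1$ and $P_E$ could share edges, which would inject a factor $(-1)^{|C_1\cap P_E|}$ into the parity bookkeeping and break the clean product below; the S-degree $\leq 2$ hypothesis is precisely what forces the intersection to be empty.

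Finally I would form $F = C_1\cup P_E$. Edge-disjointness, together with the observation that $v'$ and $v''$ have degree $1$ in each path while every other vertex shared by the two paths is internal (hence of degree $2$) in each, makes every vertex of $F$ of even degree; so $F$ decomposes into edge-disjoint simple cycles $C^{(1)},\dots,C^{(p)}$ of $G$, and this collection is nonempty since the edges at $w$ survive in $F$. Multiplicativity over this decomposition and over the disjoint union $F=C_1\cup P_E$ yields $\prod_{j} P(C^{(j)}) = P(F) = P(C_1)\,P(P_E) = P(C_1)$. If $C_1$ is not even then $P(C_1)=-1$, so an odd number of the $C^{(j)}$ satisfy $P(C^{(j)})=-1$; in particular at least one is an o-cycle, and $G$ contains an o-cycle, as required.
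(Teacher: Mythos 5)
Your proof is correct and takes essentially the same route as the paper: it takes a path $C_2 \subseteq E$ from $v'$ to $v''$ (which has parity $+1$ because $E$ is R-sorted), uses the S-degree $\leq 2$ hypothesis to show this path cannot meet $C_1$ except at $v'$ and $v''$, and multiplies parities. The only difference is your final even-degree/cycle-decomposition step, which is an unnecessary detour: your own argument already establishes $s_1, s_2 \notin P_E$ and $w \notin E$, so $C_1$ and $P_E$ share \emph{only} the endpoints $v', v''$, hence their union is a single cycle $C$ with $P(C) = P(C_1)P(P_E) = P(C_1)$ — which is exactly how the paper concludes.
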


\begin{proof}
If $v^{'} = v^{''}$, then $C_1$ is not even, then it is itself and e-cycle. Otherwise consider any path $C_2$ connecting $v^{'}$ and $v^{''}$ and lying entirely in $E$. $C_2$ exists since $E$ is connected, and $P(C_2) = 1$ since $E$ is R-sorted. Since $G$ has S-degree $\leq 2$, and $|C_1| = 4$, $C_1$ and $C_2$ share only endpoints, $v^{'}$ and $v^{''}$, and hence together they form a cycle $C$. If $P(C_1) = -1$, then $P(C) = P(C_2)P(C_1) = -1$, and so $C$ is an o-cycle. \qquad \qed
\end{proof}

\begin{lemma}
\label{lem6}
Suppose $G$ is a connected SR graph with S-degree $\leq 2$ which does not contain an o-cycle. Then it can be R-sorted. 
\end{lemma}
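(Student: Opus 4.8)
The plan is to reduce R-sorting to a consistent two-valued labelling of the R-vertices and to show that the only obstruction to such a labelling is the presence of an o-cycle. First I would unpack what ``R-sorted'' means under the hypothesis that $G$ has S-degree $\leq 2$. A short R-to-R path has length $2$, so for such a path $E$ one has $P(E) = (-1)^{1}\mathrm{sign}(E) = -\mathrm{sign}(E)$; thus $P(E) = 1$ exactly when its two edges carry opposite signs. Since each S-vertex has degree at most $2$, each S-vertex lies on at most one short R-to-R path, and $G$ is R-sorted if and only if every S-vertex of degree $2$ has its two incident edges of opposite sign.

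Next I would encode an R-resigning as a function $\rho : V_R \to \{-1,1\}$ (with $\rho(R) = -1$ meaning ``flip $R$''), so that an edge $e = \{S_i, R_j\}$ of sign $s_e$ acquires sign $s_e\,\rho(R_j)$ after resigning. For a degree-$2$ S-vertex $S_i$ with incident edges $e_1 = \{S_i, R_{j_1}\}$ and $e_2 = \{S_i, R_{j_2}\}$, the R-sorted requirement becomes the single $\{\pm1\}$-equation $\rho(R_{j_1})\rho(R_{j_2}) = -s_{e_1}s_{e_2}$. Collecting one such equation per degree-$2$ S-vertex, I would introduce the auxiliary graph $H$ on vertex set $V_R$ carrying one edge, labelled by the target value $c_i := -s_{e_1}s_{e_2}$, for each degree-$2$ S-vertex. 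R-sortability of $G$ is then exactly solvability of this system on $H$. By the standard spanning-tree propagation argument (degree-$1$ S-vertices, and R-vertices isolated in $H$, impose no constraint, so one may work component by component), the system is solvable if and only if, around every cycle of $H$, the product of the labels $c_i$ equals $+1$; and it suffices to check this on a fundamental cycle basis, whose members are simple cycles of $H$.

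The crux, and the step I expect to carry the real content, is identifying these cycle conditions with the e-cycle/o-cycle dichotomy. A simple cycle of $H$ uses distinct R-vertices and distinct edges, hence distinct S-vertices, so -- using S-degree $\leq 2$ to rule out coincidences -- it lifts to a genuine non-self-intersecting cycle $C$ of $G$ of some length $2r$. Along $C$ each edge has a unique S-endpoint and each of the $r$ S-vertices meets exactly two cycle edges, so multiplying the $r$ labels gives $\prod_i c_i = (-1)^r \prod_{e \in C}\mathrm{sign}(e) = (-1)^r \mathrm{sign}(C) = P(C)$. Hence the product of labels around the lifted cycle equals $+1$ precisely when $C$ is an e-cycle. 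Since by hypothesis $G$ contains no o-cycle, every cycle of $G$ -- in particular every lift of a fundamental cycle of $H$ -- is an e-cycle, so all cycle conditions hold and the system is solvable. The resulting $\rho$ is the required R-resigning, and $G$ can be R-sorted.

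The main obstacle is therefore bookkeeping rather than depth: verifying that the $H$-cycles relevant to solvability correspond exactly to simple cycles of $G$ (which is where S-degree $\leq 2$ is used, to prevent the lifted walk from self-intersecting), and confirming that the label product reproduces the parity $P(C)$. Once these identifications are in place, the equivalence between solvability and absence of o-cycles, and hence the conclusion, is immediate.
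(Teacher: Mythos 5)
Your proof is correct, and it reaches the conclusion by a genuinely different route from the paper's. The paper argues by induction on the number of R-vertices: it enumerates the R-vertices of the connected graph $G$ so that $G_{\{R_1,\ldots,R_k\}}$ is connected, R-sorts that subgraph by the induction hypothesis, and then either a single R-flip on $R_{k+1}$ completes the sorting, or two opposite-parity short paths at $R_{k+1}$ are edge-disjoint (by S-degree $\leq 2$) and combine, via Lemma~\ref{lem5}, with a path inside the sorted subgraph to produce an o-cycle. You instead run the classical balance/switching argument for signed graphs: encode an R-resigning as $\rho\colon V_R\to\{\pm 1\}$, use S-degree $\leq 2$ to reduce R-sortedness to one multiplicative constraint per degree-$2$ S-vertex, and solve the resulting $\{\pm 1\}$-system by spanning-tree propagation, the only obstructions being label products around fundamental cycles of the constraint multigraph $H$; your computation $\prod_i c_i = (-1)^r\,\mathrm{sign}(C) = P(C)$ correctly identifies each obstruction with the parity of the lifted cycle $C$ in $G$, so the absence of o-cycles gives solvability. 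Your route buys some things the paper's does not state: it dispenses with the connectivity hypothesis (working component by component of $H$), avoids induction and Lemma~\ref{lem5} entirely, and in fact yields the sharper equivalence that $G$ is R-sortable if and only if it contains no o-cycle (every cycle of $G$ passes only through degree-$2$ S-vertices, hence projects to a cycle of $H$). The paper's induction, conversely, is elementary and self-contained, and is reused essentially verbatim for the dual statement, Lemma~\ref{S-resign}.

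Two small points of bookkeeping you should make explicit. First, since SR graphs are multigraphs, $H$ may contain self-loops (a degree-$2$ S-vertex with both edges incident on one R-vertex, lifting to a $2$-cycle of $G$) and parallel edges (two S-vertices joining the same pair of R-vertices, lifting to a $4$-cycle); both must be counted among the cycles whose label product is checked, and your formula handles them with $r=1$ and $r=2$, a self-loop being satisfiable precisely when its label is $+1$, i.e.\ when the corresponding $2$-cycle is an e-cycle. Second, the hypothesis S-degree $\leq 2$ does its real work in your opening reformulation (an S-vertex of degree $\geq 3$ would support several short R-to-R paths, whose edges cannot be pairwise opposite in sign), not in the lifting step, where distinctness of the S-vertices already follows from the bijection between $H$-edges and degree-$2$ S-vertices. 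Neither point affects the validity of the argument.
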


\begin{proof}
The result is trivial if $G$ contains a single R-vertex, as it contains no short R-to-R paths. Suppose the result is true for graphs containing $k$ R-vertices. Then it must be true for graphs containing $k+1$ R-vertices. Suppose $G$ contains $k+1$ R-vertices. Enumerate these R-vertices as $R_1, \ldots, R_{k+1}$ in such a way that $G_{-} \equiv G_{\{R_1, \ldots, R_k\}}$ is connected. This is possible since $G$ is connected. 

By the induction hypothesis, $G_{-}$ can be R-sorted. Having R-sorted $G_{-}$, consider $R_{k+1}$. If all short paths between $R_{k+1}$ and R-vertices in $G_{-}$ have the same parity, then either they are all even and $G$ is R-sorted; or they are all odd, and a single R-flip on $R_{k+1}$ R-sorts $G$. (Note that an R-flip on $R_{k+1}$ does not affect the parity of any R-to-R paths in $G_{-}$.) Otherwise there must be two distinct short paths of opposite sign, between $R_{k+1}$ and R-vertices $v^{'}, v^{''} \in G_{-}$ ($v^{'} = v^{''}$ is possible). Since $G$ has S-degree $\leq 2$, these paths must be edge-disjoint, and together form an odd path of length 4 from $v^{'}$ to $R_{k+1}$ to $v^{''}$. By Lemma~\ref{lem5}, $G$ contains an o-cycle. \qquad \qed

\end{proof}

{\bf PROOF of Proposition~\ref{mainthm}.} From Lemma~\ref{lem6}, if no connected component of $G$ contains an o-cycle then each connected component of $G$ (and hence $G$ itself) can be R-sorted. The fact that $G$ can be R-sorted corresponds to choosing a signing of the stoichiometric matrix $\Gamma$ such that any two columns $\Gamma_i$ and $\Gamma_j$ satisfy $\Gamma_i \in \mathcal{Q}_1(-\Gamma_j)$. Thus the conditions of Proposition~\ref{S-simplic} are satisfied. \qquad \qed

\section{Examples illustrating the result and its limitations}
\label{secexamples}

{\bf Example 1: SYS $n$ from Section~\ref{secintro}.} It is easy to confirm that the reactions in {\bf SYS} $n$ have linearly independent reaction vectors for all $n$ . Moreover, as illustrated by Figure~\ref{SRsequence}, the corresponding SR graphs contain a single cycle, which, for odd (even) $n$ is an e-cycle (o-cycle). Thus for even $n$, Proposition~\ref{inj1} and subsequent remarks apply, ruling out the possibility of more than one positive nondegenerate equilibrium for (\ref{basic}) on each stoichiometry class, or in the case with outflows (\ref{basicoutflows}), ruling out multiple equilibria altogether; meanwhile, while for odd $n$, Proposition~\ref{mainthm} can be applied to (\ref{basic}) or (\ref{basicCFSTR}), implying that restricted to any invariant stoichiometry class the system is monotone, and the restricted dynamical system cannot have an attracting periodic orbit intersecting the interior of the nonnegative orthant. \\

{\bf Example 2: Generalised interconversion networks.} Consider the following system of chemical reactions: 

\begin{equation}
\label{eqintercon}
A \rightleftharpoons B, \quad A \rightleftharpoons C , \quad A \rightleftharpoons D,\quad B \rightleftharpoons C 
\end{equation}
with  SR graph shown in Figure~\ref{figintercon}. Formally, such systems have R-degree $\leq 2$ and have SR graphs which are S-sorted. Although Proposition~\ref{mainthm} cannot be applied, such ``interconversion networks'', with the N1C assumption, in fact give rise to cooperative dynamical systems \cite{banajidynsys}, and a variety of different techniques give strong convergence results, both with and without outflows \cite{banajimathchem,angelileenheersontag,banajiangeli}.

\begin{figure}[h]
\begin{center}
\begin{tikzpicture}[domain=0:8,scale=0.6]

\node[draw,shape=circle] at (1,4) {$D$};
\node[draw,shape=rectangle] at (4,4) {$R_1$};
\node[draw,shape=circle] at (7,4) {$C$};
\node[draw,shape=rectangle] at (10,4) {$R_4$};

\node[draw,shape=rectangle] at (1,1) {$R_2$};
\node[draw,shape=circle] at (4,1) {$A$};
\node[draw,shape=rectangle] at (7,1) {$R_3$};
\node[draw,shape=circle] at (10,1) {$B$};

\draw[-, thick] (4.8,4) -- (6.2,4);
\draw[-, thick] (7.8,4) -- (9.2,4);

\draw[-, dashed, thick] (1.8,1) -- (3.2,1);
\draw[-, dashed, thick] (4.8,1) -- (6.2,1);
\draw[-, thick] (7.8,1) -- (9.2,1);


\draw[-, thick] (1,1.8) -- (1,3.2);
\draw[-, dashed, thick] (4,1.8) -- (4,3.2);
\draw[-, dashed, thick] (10,1.8) -- (10,3.2);

\end{tikzpicture}
\end{center}
\caption{\label{figintercon} The SR graph for reaction system \ref{eqintercon}. All edge labels are $1$ and have been omitted. The system preserves the nonnegative orthant.}
\end{figure}
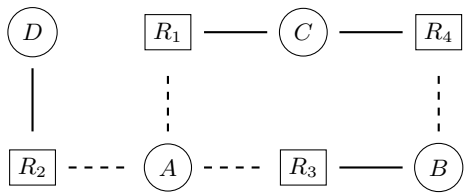

This example highlights that there is an immediate dual to Lemma~\ref{lem6}, and hence Proposition~\ref{mainthm}. The following lemma can be regarded as a restatement of well-known results on systems preserving orthant cones (see \cite{halsmith}, for example, and the discussion for CRNs in \cite{angelileenheersontag}). Its proof is omitted as it follows closely that of Lemma~\ref{lem6}.
\begin{lemma}
\label{S-resign}
Let $G$ be an SR graph with R-degree $\leq 2$ and containing no o-cycles. Then, via an S-resigning, $G$ can be S-sorted. 
\end{lemma}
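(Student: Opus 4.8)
The plan is to mirror, with the roles of S-vertices and R-vertices exchanged throughout, the argument already used for Lemma~\ref{lem6}. Every ingredient of that proof has an exact dual: \textbf{R-sorted} becomes \textbf{S-sorted}, short R-to-R paths become short S-to-S paths, R-flips become S-flips, and the closed neighbourhood $G_{\{R_k\}}$ is replaced by the analogous closed neighbourhood $G_{\{S_k\}}$ of a set of S-vertices. Since an S-flip alters only edges incident on a single S-vertex, and such edges lie entirely within one connected component, an S-resigning may be performed independently on each component; I would therefore first reduce to the case where $G$ is connected.

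Before the induction I would record the two structural facts dual to those preceding Lemma~\ref{lem5}. First, because $G$ now has R-degree $\leq 2$, any two distinct short paths issuing from a common S-vertex to (possibly equal) S-vertices must be edge-disjoint, for otherwise some R-vertex would lie on both and hence have degree $\geq 3$. Second, I would state and prove the dual of Lemma~\ref{lem5}: if $E$ is a connected, S-sorted subgraph of the connected graph $G$ (R-degree $\leq 2$) containing S-vertices $v'$ and $v''$, and $C_1$ is a path of length $4$ between $v'$ and $v''$ passing through an S-vertex not in $E$, then either $C_1$ is even or $G$ contains an o-cycle. The argument is verbatim that of Lemma~\ref{lem5}: a connecting path $C_2$ inside $E$ has $P(C_2)=1$ by S-sortedness, the R-degree bound forces $C_1$ and $C_2$ to meet only at $v'$ and $v''$, and if $P(C_1)=-1$ then the resulting cycle $C = C_1 \cup C_2$ satisfies $P(C) = P(C_2)P(C_1) = -1$.

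With these in hand I would run the induction on the number of S-vertices exactly as in Lemma~\ref{lem6}. The base case of a single S-vertex is immediate, there being no short S-to-S paths joining distinct S-vertices; the only degenerate length-$2$ S-to-S loop is a $2$-cycle, which the no-o-cycle hypothesis forces to be an e-cycle and hence does not obstruct S-sortedness. For the inductive step I would enumerate the S-vertices so that $G_- \equiv G_{\{S_1,\ldots,S_k\}}$ is connected, S-sort it by the induction hypothesis, and then examine the short S-to-S paths joining $S_{k+1}$ to S-vertices of $G_-$. If these all share a parity they are either all even, so $G$ is already S-sorted, or all odd, so a single S-flip on $S_{k+1}$ S-sorts $G$ while leaving the parity of every S-to-S path inside $G_-$ untouched. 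If two such paths have opposite sign, they are edge-disjoint, concatenate into an odd path of length $4$ through $S_{k+1} \notin G_-$, and the dual of Lemma~\ref{lem5} then produces an o-cycle, contradicting the hypothesis.

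I do not anticipate a genuine obstacle, since the S/R duality in the SR-graph formalism is exact and the combinatorics transfers without modification. The only points demanding care are bookkeeping ones: checking that S-flips are confined to a single connected component, so the reduction to connected $G$ is legitimate, and treating the degenerate length-$2$ S-to-S loop in the base case through the no-o-cycle hypothesis rather than by tacitly assuming the multigraph is simple.
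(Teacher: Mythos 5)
Your proposal is correct and is essentially the paper's own argument: the paper omits the proof of Lemma~\ref{S-resign} precisely because "it follows closely that of Lemma~\ref{lem6}", and your S/R dualization (dual of Lemma~\ref{lem5}, induction over S-vertices via the closed neighbourhood $G_{\{S_k\}}$, single S-flip in the inductive step) is exactly that intended proof. Your extra bookkeeping — reduction to connected components and treating the length-$2$ S-to-S loop (a $2$-cycle, forced by hypothesis to be an e-cycle with $P=1$) — is a sound refinement of a point the paper's Lemma~\ref{lem6} proof passes over silently.
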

Although the S-sorting process is formally similar to the R-sorting one, the interpretation of the result is quite different: changing the sign of the $i$th row of $\Gamma$ and the $i$th column of $V$ is equivalent to a recoordinatisation replacing concentration $x_i$ with $-x_i$. Such recoordinatisations give rise to a cooperative system if and only if the original system is monotone with respect to an orthant cone.\\

{\bf Example 3: Linearly independent reaction vectors are not necessary for monotonicity.} Consider the system of three reactions involving four substrates 
\begin{equation}
\label{eqdependent}
A \rightleftharpoons B + C, \qquad B \rightleftharpoons D, \qquad C + D \rightleftharpoons A 
\end{equation}
with stoichiometric matrix $\Gamma$ and SR graph shown in Figure~\ref{SRdependent}.
\begin{figure}[h]
\begin{minipage}{0.3\textwidth}
\[
\Gamma  = \left(\begin{array}{rrr}-1&0&1\\1&-1&0\\1&0&-1\\0&1&-1\end{array}\right)
\]
\end{minipage}
\begin{minipage}{0.6\textwidth}
\begin{center}
\begin{tikzpicture}[domain=0:4,scale=0.6]

\node[draw,shape=rectangle] at (1,4) {$R_1$};
\node[draw,shape=circle] at (4,4) {$B$};
\draw[-, thick] (1.8,4) -- (3.2,4);
\node[draw,shape=circle] at (2.5,2.5) {$C$};
\draw[-, thick] (2,3) -- (1.5, 3.5);
\draw[-, dashed, thick] (3,2) -- (3.5, 1.5);
\node[draw,shape=rectangle] at (4,1) {$R_3$};
\node[draw,shape=circle] at (1,1) {$A$};
\node[draw,shape=circle] at (7,1) {$D$};
\node[draw,shape=rectangle] at (7,4) {$R_2$};
\draw[-, thick] (1.7,1) -- (3.3,1);
\draw[-, dashed, thick] (1,1.7) -- (1,3.4);
\draw[-, thick, dashed] (4.7,1) -- (6.3,1);
\draw[-, dashed, thick] (4.7,4) -- (6.3,4);
\draw[-, thick] (7,1.7) -- (7,3.4);
\end{tikzpicture}

\end{center}
\end{minipage}
\caption{\label{SRdependent} The stoichiometric matrix and SR graph for reaction system \ref{eqdependent}. All edge labels are $1$ and have been omitted.}
\end{figure}
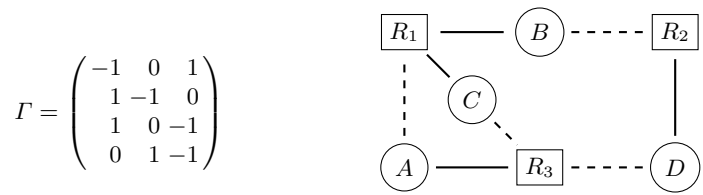

Note that $\Gamma$ is R-sorted, but has rank $2$ as all row-sums are zero. As before, let $x_i$ be the concentrations of the four substrates involved. Now, choose new coordinates $y$ satisfying $x = Ty$, where
\[
T = \left(\begin{array}{rrr}1&\,\,\,0&\,\,\,0\\0&1&0\\-1&0&0\\0&0&1\end{array}\right)\,.
\]
Note: i) $T$ has rank 3, ii) $\mathrm{Im}(\Gamma) \subset \mathrm{Im}(T)$, and iii) regarding the columns of $T$ as extremal vectors of a cone $K$, $K$ has trivial intersection with $\mathrm{Im}(\Gamma)$. One can proceed to choose some left inverse $T^{'}$ of $T$, and calculate that the Jacobian $J = T^{'}\Gamma VT$ has nonnegative off-diagonal entries. In other words the $y$-variables define a cooperative dynamical system. The relationship between $T$ and $\Gamma$ is further discussed in the concluding section. 

Note that although $K$ has empty interior in $\mathbb{R}^4$, both $K$ and $\mathrm{Im}(\Gamma)$ lie in the hyperplane $H=\mathrm{Im}(T)$ defined by $x_1 + x_3 = 0$. As $K$ is $H$-proper, attention can be restricted to invariant cosets of $H$. With mild additional assumptions on the kinetics, the theory in \cite{banajiangeli} can be applied to get strong convergence results, but this is not pursued here. \\

{\bf Example 4a: The absence of o-cycles is not necessary for monotonicity.} Consider the following system of 4 chemical reactions on 5 substrates:

\begin{equation}
\label{eqdependent1}
A \rightleftharpoons B + C, \qquad B \rightleftharpoons D, \qquad C + D \rightleftharpoons A \qquad C + E \rightleftharpoons A
\end{equation}
Define
\[
\Gamma  = \left(\begin{array}{rrrr}-1&0&1&1\\1&-1&0&0\\1&0&-1&-1\\0&1&-1&0\\0&0&0&-1\end{array}\right) \qquad \mbox{and} \qquad T = \left(\begin{array}{rrrr}1&\,\,\,0&\,\,\,0&\,\,\,0\\0&1&0&0\\-1&0&0&0\\0&0&1&0\\0&0&0&1\end{array}\right).
\]
$\Gamma$, the stoichiometric matrix, has rank $3$, and the system has SR graph containing both e- and o-cycles (Figure~\ref{SRdep1}). Further, there are substrates participating in 3 reactions, and reactions involving 3 substrates (and so it is neither R-sortable nor S-sortable). Thus, all the conditions for the results quoted so far in this paper, and for theorems in \cite{angelileenheersontag}, are immediately violated. However, applying theory in \cite{banajidynsys}, the system is order preserving. In particular, $\mathrm{Im}(T)$ is a 4D subspace of $\mathbb{R}^5$ containing $\mathrm{Im}(\Gamma)$ (the stoichiometric subspace), and $T$ defines a cone $K$ which is preserved by the system restricted to cosets of $\mathrm{Im}(T)$.\\

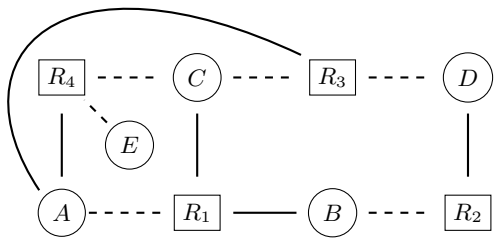
\begin{figure}[h]
\begin{center}
\vspace{-1.5cm}
\begin{tikzpicture}[domain=0:4,scale=0.6]

\node[draw,shape=rectangle] at (1,4) {$R_4$};
\node[draw,shape=circle] at (4,4) {$C$};
\draw[-, thick, dashed] (1.8,4) -- (3.2,4);
\node[draw,shape=circle] at (2.5,2.5) {$E$};
\draw[-, thick, dashed] (2,3) -- (1.5, 3.5);

\node[draw,shape=rectangle] at (4,1) {$R_1$};
\node[draw,shape=circle] at (1,1) {$A$};
\node[draw,shape=circle] at (7,1) {$B$};
\node[draw,shape=rectangle] at (10,1) {$R_2$};
\node[draw,shape=rectangle] at (7,4) {$R_3$};
\node[draw,shape=circle] at (10,4) {$D$};
\draw[-, thick, dashed] (1.6,1) -- (3.2,1);
\draw[-, thick] (1,1.8) -- (1,3.2);
\draw[-, thick] (4.8,1) -- (6.2,1);
\draw[-, thick, dashed] (7.8,1) -- (9.2,1);
\draw[-, thick] (10,1.8) -- (10,3.2);
\draw[-, thick, dashed] (7.8,4) -- (9.2,4);
\draw[-, thick, dashed] (4.8,4) -- (6.2,4);
\draw[-, thick] (4,1.8) -- (4,3.2);

\draw[-, thick] (6.3,4.5) .. controls (1,7) and (-1.5,4.5) .. (0.5,1.5);

\end{tikzpicture}

\end{center}
\caption{\label{SRdep1} The SR graph for reaction system \ref{eqdependent1}. All edge labels are $1$ and have been omitted.}
\end{figure}

{\bf Example 4b: The absence of o-cycles is not necessary for monotonicity.} Returning to the system of reactions in (\ref{eqSR1}), the system has SR graph containing an o-cycle (Figure~\ref{figSR1}, {\em left}). Nevertheless, the system was shown in \cite{banajidynsys} to preserve a {\em nonsimplicial} cone for all N1C kinetics. In fact, the further analysis in \cite{banajiangeli} showed that with mild additional assumptions this system is strongly monotone and all orbits on each stoichiometry class converge to an equilibrium which is unique on that stoichiometry class. It is worth mentioning that this example is fundamentally different from Example 4a, and that it is currently unclear how commonly reaction systems preserve orders generated by nonsimplicial cones. 

\section{Discussion and open questions}

The results presented here provide only a glimpse of the possibilities for analysis of limit sets of CRNs using graph-theoretic -- and more generally combinatorial -- approaches. The literature in this area is growing rapidly, and new techniques are constantly being brought into play. Working with the weakest possible kinetic assumptions often gives rise to approaches quite different from those used in the previous study of mass-action systems. Conversely, it is possible that such approaches can be used to provide explicit restrictions on the kinetics for which a system displays some particular behaviour. 

The paper highlights an interesting duality between questions of multistationarity and questions of stable periodic behaviour, a duality already implicit in discussions of interaction graphs \cite{gouze98,soule,kaufman,angelihirschsontag}. Loosely, the absence of e-cycles (positive cycles) is associated with injectivity for systems described by SR graphs (I graphs); and the absence of o-cycles (negative cycles) is associated with absence of periodic attractors for systems described by SR graphs (I graphs). The connections between apparently unrelated SR and I graph results on injectivity have been clarified in \cite{banajiJMAA}, but there is still considerable work to be done to clarify the results on monotonicity.

One open question regards the relationship between the theory and examples presented here on monotonicity, and previous results, particularly Theorem~1 in \cite{angelileenheersontag}, on monotonicity in ``reaction coordinates''. Note that by Proposition~4.5 in \cite{angelileenheersontag} the ``positive loop property'' described there is precisely Conditions~1~and~2 in Proposition~\ref{mainthm} here. At the same time, the requirement that the stoichiometric matrix has full rank, is not needed for monotonicity in reaction coordinates. In some cases (e.g. Example 3 above), it can be shown that this requirement is unnecessary for monotonicity too, but it is currently unclear whether this is always the case. On the other hand, as illustrated by Examples~4a~and~4b, the positive loop property is not needed for monotonicity. 

Consider again Examples~3~and~4a. The key fact is that their stoichiometric matrices admit factorisations $\Gamma = T_1T_2$, taking the particular forms
\[
\left(\begin{array}{rrr}-1&0&1\\1&-1&0\\1&0&-1\\0&1&-1\end{array}\right) = \left(\begin{array}{rrr}1&\,\,\,0&\,\,\,0\\0&1&0\\-1&0&0\\0&0&1\end{array}\right)\left(\begin{array}{rrr}-1&0&1\\1&-1&0\\0&1&-1\end{array}\right)\qquad \mbox{(Example 3), and}
\]
\[
\left(\begin{array}{rrrr}-1&0&1&1\\1&-1&0&0\\1&0&-1&-1\\0&1&-1&0\\0&0&0&-1\end{array}\right) = \left(\begin{array}{rrrr}1&\,\,\,0&\,\,\,0&\,\,\,0\\0&1&0&0\\-1&0&0&0\\0&0&1&0\\0&0&0&1\end{array}\right)\left(\begin{array}{rrrr}-1&0&1&1\\1&-1&0&0\\0&1&-1&0\\0&0&0&-1\end{array}\right)\,\qquad \mbox{(Example 4a).}
\]
In each case, the first factor, $T_1$, has exactly one nonzero entry in each row. On the other hand, the second factor, $T_2$, is S-sorted. The theory in \cite{banajidynsys} ensures that these conditions are sufficient (though not necessary) to guarantee that the system restricted to some coset of $\mathrm{Im}(T_1)$, is monotone with respect to the order defined by $T_1$. The dynamical implications of this factorisation result will be elaborated on in future work. 

A broad open question concerns the extent to which the techniques presented here extend to systems with discrete-time, and perhaps also discrete-state space. In \cite{angelipetrinet}, there were shown to be close relationships, but also subtle differences, between results on persistence in the continuous-time, continuous-state context, and results on liveness in the discrete-time, discrete-state context. Even discretising only time can lead to difficulties: while the interpretation of injectivity results in the context of discrete-time, continuous-state, systems is straightforward, the dynamical implications of monotonicity can differ from the continuous-time case. For example, strongly monotone disrete-time dynamical systems may have stable $k$-cycles for $k \geq 2$ \cite{jiang96JMAA}. When the state space is discrete, an additional difficulty which may arise concerns differentiability of the associated functions, an essential requirement for the results presented here.

Finally, the work on monotonicity here has an interesting relationship with examples presented by Kunze and Siegel, for example in \cite{kunzesiegelpositivity}. This connection remains to be explored and clarified. 

\bibliographystyle{unsrt}

\end{document}